\newcounter{iii}
\newcommand{\bb}{{\mathcal B}}
\newcommand{\aaa}{{\mathcal A}}
\newcommand{\G}{{\mathcal G}}
\newcommand{\hh}{{\mathcal H}}
\newcommand{\ff}{\mathcal F}
\theoremstyle{plain}
\newtheorem{thm}{Theorem}
\newtheorem{lem}[thm]{Lemma}
\theoremstyle{definition}
\newtheorem{prop}[thm]{Proposition}
\title{Diversity of uniform intersecting families}
\author{Andrey Kupavskii\footnote{Moscow Institute of Physics and Technology, University of Birmingham;  Email: {\tt kupavskii@yandex.ru} \ \ Research supported by the grant RNF~16-11-10014.}}
\date{}
\begin{document}
\maketitle
\begin{abstract} A family $\ff\subset 2^{[n]}$ is called {\it intersecting}, if any two of its sets intersect. Given an intersecting family, its {\it diversity} is the number of sets not passing through the most popular element of the ground set. Peter Frankl made the following conjecture: for $n> 3k>0$ any intersecting family $\ff\subset {[n]\choose k}$ has diversity at most ${n-3\choose k-2}$. This is tight for the following ``two out of three'' family: $\{F\in {[n]\choose k}: |F\cap [3]|\ge 2\}$.  In this note we prove this conjecture for $n\ge ck$, where $c$ is a constant independent of $n$ and $k$. In the last section, we discuss the case $2k<n<3k$ and show that one natural generalization of Frankl's conjecture does not hold.
\end{abstract}

\section{Introduction}
We denote $[n]:=\{1,\ldots, n\}$, $2^{[n]}:=\{S:S\subset [n]\}$ and ${[n]\choose k}:=\{S:S\subset [n], |S|=k\}$. Any subset of $2^{[n]}$ we call a {\it family}. A family $\ff\subset 2^{[n]}$ is called {\it intersecting}, if any two of its sets intersect. The {\it degree} $\delta_i$ of an element $i\in[n]$ is the number of sets from $\ff$ containing $i$. We denote by $\Delta(\ff)$ the largest degree of an element: the maximum of $\delta_i$ over $i\in [n]$. The {\it diversity} $\gamma(\ff)$ of $\ff$ is the number of sets, not containing the element of the largest degree: $\gamma(\ff):=|\ff|-\Delta(\ff)$.

The study of intersecting families started from the famous Erd\H os-Ko-Rado theorem \cite{EKR}, and since then a lot of effort was put into understanding the structure of large intersecting families. The EKR theorem states that the largest uniform intersecting family consists of all sets containing a given element, that is, the maximal family of diversity 0. The Hilton-Milner theorem \cite{HM} gives the largest size of the family with diversity at least 1. Frankl's theorem \cite{Fra1}, especially in its strengthened version due to Kupavskii and Zakharov \cite{KZ} bounds the size of the families with diversity at least ${n-u-1\choose n-k-1}$, where $3\le u\le k$ is a fixed real number. We also refer to \cite{Kup}, where, among other results, a conclusive version of this theorem was obtained.

\begin{thm}[\cite{KZ}]\label{thmkz} Let $n>2k>0$ and $\ff\subset {[n]\choose k}$ be an intersecting family. Then, if $\gamma(\ff)\ge {n-u-1\choose n-k-1}$ for some real $3\le u\le k$, then \begin{equation}\label{eq01}|\ff|\le {n-1\choose k-1}+{n-u-1\choose n-k-1}-{n-u-1\choose k-1}.\end{equation}
\end{thm}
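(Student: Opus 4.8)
The plan is to reduce \eqref{eq01} to a sharp inequality for a pair of cross‑intersecting $(k-1)$‑uniform families, and then to settle that inequality via complementation and the Kruskal--Katona theorem.

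\emph{Setting up.} Fix a vertex $x$ of maximum degree and split $\ff=\ff_x^{+}\sqcup\ff_{\bar x}$ according to whether a set contains $x$; write $\ff_x=\{F\setminus\{x\}\colon F\in\ff_x^{+}\}$ for the link. Then $|\ff|=|\ff_x|+\gamma$ with $\gamma=|\ff_{\bar x}|$, the families $\ff_x$ and $\ff_{\bar x}$ are cross‑intersecting, and $\ff_{\bar x}$ is intersecting. If $\tau(\ff)\le 1$ then $\ff$ is a sub‑star and $\gamma=0$, so the hypothesis is void; if $\tau(\ff)\ge 3$ then $\ff$ is far from a star and classical covering‑number bounds of Frankl place $|\ff|$ comfortably below the right‑hand side of \eqref{eq01} for every admissible $u$ --- a routine check, which I omit, that should be carried out uniformly for $n>2k$. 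So the main case is $\tau(\ff)=2$.

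\emph{Reducing to a cross‑intersecting inequality.} Assume $\tau(\ff)=2$ and fix a $2$‑element cover $\{a,b\}$ with $\delta_a\ge\delta_b$; every set of $\ff$ meets $\{a,b\}$. Put $m:=n-2$, $r:=k-1$, identify $[m]$ with $[n]\setminus\{a,b\}$, and let $\aaa\subset\binom{[m]}{r}$ be the trace of the sets of $\ff$ containing $a$ but not $b$, and $\bb\subset\binom{[m]}{r}$ the trace of those containing $b$ but not $a$. Then $\aaa$ and $\bb$ are cross‑intersecting, and with $e:=|\{F\in\ff\colon\{a,b\}\subset F\}|\le\binom{m}{r-1}$ we have $|\ff|=|\aaa|+|\bb|+e$. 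From $\delta_a\ge\delta_b$ we get $|\aaa|\ge|\bb|$, and from $\delta_x\ge\delta_a=|\aaa|+e$ (maximality of $x$) we get $\gamma=|\ff|-\delta_x\le|\bb|$; hence $|\bb|\ge\gamma\ge\binom{n-u-1}{n-k-1}=\binom{m-s}{m-r}$ with $s:=u-1\in[2,r]$. So it suffices to prove: \emph{if $\aaa,\bb\subset\binom{[m]}{r}$ are cross‑intersecting with $m>2r$, $|\aaa|\ge|\bb|$, and $|\bb|\ge\binom{m-s}{m-r}$, then $|\aaa|+|\bb|\le\binom{m}{r}-\binom{m-s}{r}+\binom{m-s}{m-r}$.} Indeed, adding $e\le\binom{m}{r-1}$ and rewriting the binomials via $m-s=n-u-1$, $m-r=n-k-1$, $\binom{m}{r}+\binom{m}{r-1}=\binom{n-1}{k-1}$ turns this into \eqref{eq01}.

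\emph{Proving the cross‑intersecting inequality.} Because no member of $\aaa$ is disjoint from a member of $\bb$, the family $\aaa$ is disjoint from the set $\partial(\bb^{\mathrm c})$ of all $r$‑subsets of members of $\bb^{\mathrm c}:=\{[m]\setminus B\colon B\in\bb\}$ (equivalently, of all $r$‑sets avoiding some $B\in\bb$); here $\bb^{\mathrm c}$ is $(m-r)$‑uniform with $|\bb^{\mathrm c}|=|\bb|$. Thus $|\aaa|\le\binom{m}{r}-|\partial(\bb^{\mathrm c})|$, and the iterated Lov\'asz form of Kruskal--Katona gives $|\partial(\bb^{\mathrm c})|\ge\binom{z}{r}$, where $z\ge m-r$ is real with $|\bb|=\binom{z}{m-r}$; moreover $z\ge m-s$ since $|\bb|\ge\binom{m-s}{m-r}$. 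Now invoke $|\aaa|\ge|\bb|$: from $\binom{z}{m-r}=|\bb|\le|\aaa|\le\binom{m}{r}-\binom{z}{r}$ we get $\binom{z}{r}+\binom{z}{m-r}\le\binom{m}{r}=\binom{m-1}{r}+\binom{m-1}{m-r}$, and since $x\mapsto\binom{x}{r}+\binom{x}{m-r}$ is increasing on $[m-r,\infty)$ this forces $z\le m-1$. Finally $|\aaa|+|\bb|\le\binom{m}{r}-\chi(z)$ with $\chi(x):=\binom{x}{r}-\binom{x}{m-r}$, so we are done once $\chi(z)\ge\chi(m-s)$ for every $z\in[m-s,m-1]$. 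But for $m>2r$ one has $\chi(m-1)=\chi(m-2)$ by Pascal's identity, and a direct check shows $\chi$ is non‑decreasing on $[m-r,m-2]$ and stays above its common endpoint value on $[m-2,m-1]$, so $\min_{[m-s,\,m-1]}\chi=\chi(m-s)$, as required.

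\emph{The main obstacle.} Everything hinges on this last comparison, and it is genuinely false without $|\aaa|\ge|\bb|$: then $z$ could be as large as $m$, where $\chi(m)=0<\chi(m-s)$. So the maximum‑degree hypothesis is exactly what prevents a large diversity from being ``spread out'', and the work lies in (i) pinning down $\min_{[m-s,m-1]}\chi$ cleanly for real $u$, and (ii) disposing of the case $\tau(\ff)\ge3$ without loss across the whole range $n>2k$; the latter is where I would expect to need more than off‑the‑shelf estimates, and is presumably what the regular‑bipartite‑graph technology of Kupavskii--Zakharov is there to provide.
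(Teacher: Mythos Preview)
First, a framing point: this theorem is not proved in the present paper --- it is quoted from \cite{KZ} as background, so there is no in-paper proof to compare against. I can only comment on your argument itself and on how it relates to the actual Kupavskii--Zakharov method (whose flavour is visible in the proof of Lemma~\ref{lemkey} here).

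Your reduction in the case $\tau(\ff)=2$ is correct and the cross-intersecting inequality you isolate is the right one; the complementation/Kruskal--Katona step is also sound in outline. Two issues remain. The minor one: your analysis of $\chi(z)=\binom{z}{r}-\binom{z}{m-r}$ on the \emph{real} interval $[m-s,m-1]$ is only sketched --- the integer Pascal identity giving $\chi(m-1)=\chi(m-2)$ does not by itself control $\chi$ between integers, and the claimed monotonicity on $[m-r,m-2]$ for real arguments needs a genuine calculus/log-concavity argument. The major one is the one you yourself flag: the case $\tau(\ff)\ge 3$ is simply asserted, and ``classical covering-number bounds of Frankl'' do not obviously deliver the sharp inequality \eqref{eq01} uniformly over all $n>2k$ and all real $u\in[3,k]$. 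As written, this is a genuine gap.

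The point is that the \cite{KZ} proof avoids the covering-number split altogether. One takes $x$ of maximum degree and works directly with the cross-intersecting pair $\ff_x\subset\binom{[n]\setminus\{x\}}{k-1}$ and $\ff_{\bar x}\subset\binom{[n]\setminus\{x\}}{k}$ of \emph{different} uniformities, under the single hypothesis $|\ff_{\bar x}|=\gamma\ge\binom{n-u-1}{n-k-1}$. After replacing both families by lex-initial segments (Theorem~\ref{thmHil}), the bound $|\ff_x|+|\ff_{\bar x}|\le\binom{n-1}{k-1}+\binom{n-u-1}{n-k-1}-\binom{n-u-1}{k-1}$ is obtained by a bipartite double-counting between the two levels --- essentially the mechanism behind Lemma~\ref{lemkey}. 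This handles every intersecting $\ff$ in one stroke, with no $\tau$-casework; that is precisely what the ``regular-bipartite-graph technology'' buys. Your detour through a $2$-cover trades a mixed-uniformity inequality for a same-uniformity one, but at the cost of an unresolved extra case.
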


It is easy to see that the theorem above is sharp for each integer $u\in [3,k]$: consider the families
$$\mathcal A_u:=\{F\in{[n]\choose k}: F\supset [2,u+1] \text{ or } 1\in F, F\cap [2,u+1]\ne \emptyset\}, \ \ \ \ \ \ \ u\in [2,k].$$
The family $\mathcal A_3$ has diversity ${n-4\choose k-3}$ and size ${n-1\choose k-1}+{n-4\choose k-3}-{n-4\choose k-1} = 3{n-3\choose k-2}+{n-3\choose k-3}$. The family $\mathcal A_2$ has the same size as $\mathcal A_3$ (and this is why the case $u=2$ does not appear in the Theorem~\ref{thmkz}), but the diversity of $\mathcal A_2$ is bigger: it is equal to ${n-3\choose k-2}$.

The following problem was suggested by Katona and addressed by Lemons and Palmer \cite{LP}: what is the maximum diversity of an intersecting family $\ff\subset {[n]\choose k}$? They found out that for $n>6k^3$ we have $\gamma(\ff)\le {n-3\choose k-2}$, with the equality possible only for $\mathcal A_2$ and some of its subfamilies. Recently, Frankl \cite{Fra6} (Theorem 2.4) proved that $\gamma(\ff)\le {n-3\choose k-2}$ for all $n\ge 6k^2$, and conjectured that the same holds for $n>3k$.

The purpose of this note is to prove the following theorem
\begin{thm}\label{thm1} There exists a constant $C$, such that for any $n>Ck>0$ any intersecting family $\ff\subset {[n]\choose k}$ satisfies $\gamma(\ff)\le {n-3\choose k-2}$. Moreover, if $\gamma(\ff)={n-3\choose k-2}$, then $\ff$ is a subfamily of an isomorphic copy of $\aaa_2$.
\end{thm}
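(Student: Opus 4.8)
The plan is to recast the problem as a question about transversals of one intersecting family, settle the ``star'' case exactly (which also identifies the extremal configuration), and reduce the remaining cases to a covering-number analysis fed by Theorem~\ref{thmkz}.

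\medskip\noindent\textbf{Reduction.} Assume $\gamma(\ff)>0$ and, after relabelling, that $1$ has the largest degree, $\delta_1=\Delta(\ff)$. Write $\ff=\ff_1\sqcup\G$ with $\ff_1=\{F\in\ff:1\in F\}$, $\G=\ff\setminus\ff_1$, so $\gamma(\ff)=|\G|$, and put $\aaa=\{F\setminus\{1\}:F\in\ff_1\}\subset\binom{[2,n]}{k-1}$, so $|\aaa|=\Delta(\ff)$. Then $\ff$ is intersecting iff $\G$ is intersecting and every member of $\aaa$ is a transversal of $\G$ (i.e.\ $\aaa$ and $\G$ cross-intersect). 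The maximality of $\delta_1$ gives $\delta_1\ge\delta_x$ for all $x\in[2,n]$; writing $\G_x=\{G\in\G:x\in G\}$, this is
\begin{equation}\label{deg}
|\{A\in\aaa:x\notin A\}|\ \ge\ |\G_x|\qquad(x\in[2,n]).
\end{equation}
Enlarging $\aaa$ to the family $\aaa^{*}$ of all $(k-1)$-element transversals of $\G$ only strengthens \eqref{deg}, so it suffices to prove: if $\G\subset\binom{[2,n]}{k}$ is intersecting and its transversal family $\aaa^{*}$ satisfies \eqref{deg} for every $x$, then $|\G|\le\binom{n-3}{k-2}$, with equality only when $\G$ consists of all $k$-sets through a fixed pair --- in which case $\aaa^{*}$, hence $\ff_1$, is forced into the $\aaa_2$-pattern.

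\medskip\noindent\textbf{A transversal lemma.} The core statement is: \emph{for $m\ge 2k$ and $\mathcal P\subset\binom{[m]}{k-1}$, if $\mathcal P$ has at least $|\mathcal P|$ transversals of size $k-1$, then $|\mathcal P|\le\binom{m-1}{k-2}$, and if $|\mathcal P|=\binom{m-1}{k-2}$ then $\mathcal P$ is a full star.} Indeed, a $(k-1)$-set $A$ meets every member of $\mathcal P$ iff $A$ lies in no member of $\mathcal Q:=\{[m]\setminus P:P\in\mathcal P\}\subset\binom{[m]}{m-k+1}$, so the number of such transversals equals $\binom{m}{k-1}-|\partial\mathcal Q|$, where $\partial\mathcal Q$ is the set of $(k-1)$-subsets of members of $\mathcal Q$. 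If $|\mathcal P|>\binom{m-1}{k-2}=\binom{m-1}{m-k+1}$, then writing $|\mathcal Q|=\binom{x}{m-k+1}$ we get $x>m-1$, so by iterated Kruskal--Katona $|\partial\mathcal Q|\ge\binom{x}{k-1}>\binom{m-1}{k-1}$, and the number of transversals is $<\binom{m}{k-1}-\binom{m-1}{k-1}=\binom{m-1}{k-2}<|\mathcal P|$, a contradiction; the equality case follows from the same chain together with the uniqueness in Kruskal--Katona (F\"uredi--Griggs, M\"ors), which forces $\mathcal Q=\binom{[m-1]}{m-k+1}$ up to isomorphism, i.e.\ $\mathcal P$ a full star.

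\medskip\noindent\textbf{Assembling the proof, and the obstacle.} One splits on $\tau(\G)$. If $\tau(\G)=1$ --- say every member of $\G$ contains $2$ --- then \eqref{deg} with $x=2$ supplies at least $|\G_2|=|\G|$ transversals of $\G$ avoiding $2$; each is a $(k-1)$-subset of $[3,n]$ meeting every member of $\mathcal P:=\{G\setminus\{2\}:G\in\G\}$, so the transversal lemma with $m=n-2$ gives $|\G|\le\binom{n-3}{k-2}$, and equality forces $\mathcal P$, hence $\G$, to be a full star of a pair; unwinding the reduction yields precisely the $\aaa_2$-pattern. The cases $\tau(\G)\ge 2$ are the real work, since there a bare size bound fails: intersecting families of covering number $2$ (resp.\ $\ge3$) may have up to about $\binom{n-2}{k-1}$ (resp.\ $3\binom{n-3}{k-2}$) sets. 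Here one runs \eqref{deg} at a most popular element $y$ of $\G$: a transversal avoiding $y$ must cross-intersect both the $(k-1)$-sets $\{G\setminus\{y\}:G\in\G_y\}$ and the $k$-sets of $\G$ avoiding $y$, and after deletion of $y$ the family still has covering number at least $2$, so this configuration is spread out and admits few $(k-1)$-transversals. Combining this with Frankl's bound for intersecting families of covering number $\ge3$ and with the upper estimate on $\Delta(\ff)$ furnished by Theorem~\ref{thmkz} (namely $\Delta(\ff)\le\binom{n-1}{k-1}-\binom{n-u-1}{k-1}$ whenever $\gamma(\ff)\ge\binom{n-u-1}{n-k-1}$) yields a contradiction once $n>Ck$ and rules out equality. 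The principal difficulty is to make this last step work with an \emph{absolute} constant $C$, rather than one growing with $k$ as in Frankl's $n\ge 6k^{2}$.
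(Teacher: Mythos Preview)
Your reduction and the transversal lemma are fine, and the $\tau(\G)=1$ case is correctly and cleanly handled (including the equality characterization). But the proposal is not a proof of Theorem~\ref{thm1}: the case $\tau(\G)\ge 2$ is only sketched, and you yourself identify it as ``the principal difficulty''. Concretely, you assert that running \eqref{deg} at a most popular $y\in\G$, together with Frankl's $\tau\ge 3$ bound and the $\Delta$-bound extracted from Theorem~\ref{thmkz}, ``yields a contradiction once $n>Ck$'', but no inequality is actually written down or verified. For $\tau(\G)=2$ the family $\G$ can be as large as $\binom{n-2}{k-1}$, which swamps $\binom{n-3}{k-2}$ by a factor of order $n/k$, so the constraint \eqref{deg} has real work to do; showing that the number of $(k-1)$-transversals avoiding $y$ is small enough to force $|\G_y|$ small, and then leveraging that into $|\G|\le\binom{n-3}{k-2}$ with an \emph{absolute} $C$, is exactly the heart of the matter and is absent here. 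The equality statement also remains incomplete until $\tau(\G)\ge 2$ is ruled out with strict inequality.

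For comparison, the paper takes a completely different route: it invokes the Dinur--Friedgut junta theorem with $r=5$ to get $|\ff\setminus\mathcal J|<\binom{n-5}{k-4}$ for some intersecting $j$-junta $\mathcal J$ of bounded size, shows by an elementary proposition that either $\mathcal J\subset\aaa_2$ or $\gamma(\mathcal J)\le 2^{j}\binom{n-j}{k-3}$ (and the latter already gives $\gamma(\ff)<\binom{n-3}{k-2}$ for $n>Ck$), and in the remaining case $\mathcal J=\aaa_2$ finishes with a cross-intersecting inequality (Lemma~\ref{lemkey}) proved via lex-compression and a bipartite double-counting. That structural shortcut via juntas is precisely what lets the paper bypass the covering-number analysis you propose; if you want to avoid Dinur--Friedgut, you will need to supply the missing quantitative argument for $\tau(\G)\ge 2$ in full.
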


We note that a somewhat similar proof strategy, which first uses results on Boolean functions to obtain some rough structure for the problem, and then uses combinatorics to obtain a precise result, was recently used by Keller and Lifshitz \cite{KL}  in a much more general setting.
\section{Proof of Theorem~\ref{thm1}}

The following theorem, proven by Dinur and Friedgut \cite{DF}, is the main ingredient in the proof. We say that a family $\mathcal J\subset 2^{[n]}$ is a {\it $j$-junta}, if there exists a subset $J\subset [n]$ of size $j$ (the {\it center} of the junta), such that the membership of a set in $\ff$ is determined only by its intersection with $J$, that is, for some family $\mathcal J^*\subset 2^{J}$ (the {\it defining family}) we have $\ff=\{F:F\cap J\in \mathcal J^*\}$.

\begin{thm}[\cite{DF}]\label{thmdf} For any integer $r\ge 2$,there exist functions $j(r), c(r)$, such that for any integers $1 < j(r) < k < n/2$, if $\ff\subset {[n]\choose k}$ is an intersecting family with $|\ff|\ge c(r){n-r\choose k-r}$, then there exists an intersecting $j$-junta $\mathcal J$ with
$j\le j(r)$ and \begin{equation}\label{eqDF} |\ff\setminus\mathcal J|\le c(r){n-r\choose k-r}.\end{equation}
\end{thm}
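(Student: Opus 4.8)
The plan is to deduce the statement from harmonic analysis on the Johnson scheme (equivalently, after a standard transfer, on the $p$-biased cube with $p=k/n$), combining a spectral consequence of being intersecting, a junta/stability theorem for Boolean functions, an induction on $r$, and a short combinatorial step that makes the resulting junta intersecting. Write $g=\mathbf 1_{\ff}$ on $\binom{[n]}{k}$ and decompose $g=\sum_{t\ge 0}g^{=t}$ into the eigenspaces of the Kneser graph $K(n,k)$, whose eigenvalue on level $t$ is $(-1)^{t}\binom{n-k-t}{k-t}$. Being intersecting means $\langle g,Ag\rangle=0$ for the adjacency matrix $A$; the Hoffman (ratio) bound then gives EKR, $|\ff|\le\binom{n-1}{k-1}$, together with its stability form: if $|\ff|$ is within $s$ of $\binom{n-1}{k-1}$, then $g$ is, in $\ell_2$, close to the space of degree-$\le 1$ functions on the slice (the constants together with the first Johnson eigenspace), quantitatively in terms of $s$. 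A Friedgut--Kalai--Naor / Kindler--Safra type theorem for the slice---these rely on hypercontractivity---then upgrades ``close to degree $1$'' to ``close to a dictator'', which shows that $\ff$ is, up to $O\!\left(\binom{n-2}{k-2}\right)$ sets, contained in a single star. This already proves the case $r=2$ with $j(2)=1$, and it is the base of an induction on $r$.

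For general $r$ the idea is to iterate, peeling a bounded number of coordinates per round. Let $i$ be a most popular element and $\ff_{\bar i}=\{F\in\ff:i\notin F\}$, which is intersecting on $[n]\setminus\{i\}$ and has size $\gamma(\ff)$. If $\gamma(\ff)\le c(r)\binom{n-r}{k-r}$ we are done using the star at $i$; otherwise $\ff_{\bar i}$ is itself a large intersecting family and we recurse on it, the point being that passing from $\ff$ to $\ff_{\bar i}$ can be made to cost only a bounded factor once one uses the Boolean-function input in place of a crude counting argument. After $r$ rounds we have distinguished a set $J$ of coordinates with $|J|\le j(r)$ together with an error of at most $c(r)\binom{n-r}{k-r}$ sets of $\ff$ not accounted for. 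It remains to produce from this an \emph{intersecting} junta with center $J$: take $\mathcal J^{*}$ to be the set of $A\subseteq J$ for which the cylinder $\{F:F\cap J=A\}$ contains at least $\theta\binom{n-|J|}{k-|A|}$ members of $\ff$, for a threshold $\theta=\theta(r)$. The ``light'' cylinders then contain at most $2^{j(r)}\theta\binom{n-1}{k-1}$ members of $\ff\setminus\mathcal J$ in total, which can be absorbed into $c(r)\binom{n-r}{k-r}$; and $\mathcal J^{*}$ must be intersecting, since if $A,B\in\mathcal J^{*}$ were disjoint then---$n$ being larger than $2k$, so that disjoint pairs are far from a bottleneck---one could greedily complete a member of $\ff$ in the $A$-cylinder to a disjoint member of $\ff$ in the (non-negligible) $B$-cylinder, contradicting that $\ff$ is intersecting.

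I expect the main obstacle to be keeping $j(r)$ and $c(r)$ free of $n$ and $k$. The naive route---``every $F\in\ff$ meets a fixed $F_{0}$, so some element of $F_{0}$ has degree $\ge|\ff|/k$''---produces a popular coordinate only at the cost of a factor $k$ in the count, and iterating it would make the constants grow like $k^{r}$; removing this loss is exactly what forces one into the analytic machinery, so that each round of the induction costs only an absolute constant. Concretely this needs stability and FKN/Kindler--Safra statements for the slice (or the biased cube) with dimension-free constants, valid uniformly as $k/n$ ranges over $(0,1/2)$, and it needs the threshold $\theta$ to be chosen simultaneously small enough that the light cylinders are cheap and large enough that the cross-cylinder completion still yields a disjoint pair. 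The regime $n$ close to $2k$ is the most delicate, since there $\binom{n-r}{k-r}$ is only a bounded-away-from-$1$ fraction of $\binom{n-1}{k-1}$ and there is the least slack in all of these estimates.
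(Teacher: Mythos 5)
This statement is Theorem~2 of Dinur and Friedgut \cite{DF}; the paper under review imports it as a black box and supplies no proof of its own, so there is no ``paper's proof'' to compare against. What you have written is a speculative reconstruction of the Dinur--Friedgut argument, and as it stands it has a genuine gap already at the base case, plus two further unresolved steps.

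The base case problem is the crucial one. You deduce $r=2$ from Hoffman-bound stability on the slice followed by an FKN/Kindler--Safra step; but that chain needs $|\ff|$ to be \emph{close to} $\binom{n-1}{k-1}$, whereas the theorem's hypothesis only requires $|\ff|\ge c(2)\binom{n-2}{k-2}$, which is smaller by a factor of order $k/n$. For $n\gg k$ there is a wide range of sizes where the hypothesis is satisfied yet $|\ff|/\binom{n-1}{k-1}=o(1)$, and in that range the Hoffman deficit $s$ is essentially the full EKR bound, so the stability input gives nothing. The content of the Dinur--Friedgut theorem is precisely that it applies to such \emph{small} intersecting families, and their actual mechanism (a Friedgut-type junta theorem adapted to the biased cube, driven by the structural constraint that intersecting upsets place on the Fourier spectrum) is designed to work at these much lower densities; a spectral stability argument pegged to the EKR extremum does not reach them. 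Your inductive step inherits this problem: you assert that passing from $\ff$ to the off-star part $\ff_{\bar i}$ ``costs only a bounded factor'', but $\ff_{\bar i}$ has size $\gamma(\ff)$, which can be orders of magnitude below $\binom{n-2}{k-2}$, so the same small-density obstruction recurs at every round and nothing forces $j(r)$ and $c(r)$ to stay bounded in terms of $r$ alone. Finally, the cleanup step (declaring $A\in\mathcal J^{*}$ when the $A$-cylinder has density at least $\theta$ and arguing that a disjoint pair $A,B\in\mathcal J^{*}$ would produce disjoint members of $\ff$) requires a cross-intersecting bound of the shape $\theta\gtrsim ab/m$ with $a,b\sim k$ and $m\sim n$, and that is $\Theta(k^{2}/n)$, not a constant, once $n$ is within a constant factor of $2k$; your own closing paragraph flags this regime as the delicate one but does not actually handle it, and the theorem as stated is supposed to hold uniformly for $k<n/2$. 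So the argument would need to be substantially rebuilt before it could serve as a proof of the cited result.
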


We start the proof of the theorem.  Choose $C$ sufficiently large (its choice will become clear later), $n>Ck>0$ and an intersecting family $\ff\subset{[n]\choose k}$. Then, applying Theorem~\ref{thmdf} with $r=5$, we get that there exists a $j$-junta $\mathcal J$, $j\le j(5)$, such that $|\ff\setminus\mathcal J|\le c(5){n-5\choose k-5}<{n-5\choose k-4}$, where the second inequality holds provided $C$ is large enough.

The first step is to show that, unless $\mathcal J = \mathcal A_2$, we have $\gamma(\ff)<{n-3\choose k-2}$.

\begin{prop} Consider an intersecting $j$-junta $\mathcal J\subset 2^{[n]}$, with center $J\subset [n], |J|=j,$ and defined by an intersecting family $\mathcal J^*\subset 2^J$. Then $\mathcal J$ satisfies one of the two following properties:
\begin{itemize}
  \item $\mathcal J$ is contained in a family isomorphic to $\mathcal A_2$.
  \item There exists  $i\in J$, such that all sets from $\mathcal J^*$ of size at most 2 contain $i$.
\end{itemize}
\end{prop}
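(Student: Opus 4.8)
The plan is to read off the structure of the defining family $\mathcal J^*$ — an intersecting family on the bounded ground set $J$ — directly from the sizes of its smallest members. The only external fact I need is the classical observation that an intersecting family of $2$-element sets is either a star (all pairs through one common vertex) or a triangle (three pairs on three vertices); I would reprove this in a sentence, since it is short: take two edges, they share a vertex, and either every edge passes through it (star) or some edge avoids it, which together with the first two forces a triangle and then excludes any further edge.

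First I would dispose of the trivial case $\mathcal J^*=\emptyset$ (then $\mathcal J=\emptyset$ and the first alternative holds), and observe that otherwise $\emptyset\notin\mathcal J^*$ since $\mathcal J^*$ is intersecting. Next, if $\mathcal J^*$ contains a singleton $\{a\}$, then by intersectingness every member of $\mathcal J^*$ contains $a$; in particular every member of size at most $2$ does, and the second alternative holds with $i=a$. So I may assume every member of $\mathcal J^*$ has size $\ge 2$, and let $\mathcal P\subseteq\mathcal J^*$ denote the family of its $2$-element members. If $\mathcal P=\emptyset$, there are no members of size $\le 2$ and the second alternative is vacuously true. Otherwise I apply the star/triangle dichotomy to $\mathcal P$. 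If $\mathcal P$ is a star with center $i$, then every $2$-element member of $\mathcal J^*$ contains $i$ and there are no singletons, so the second alternative holds. If $\mathcal P$ is a triangle on $\{a,b,c\}$, I claim every $A\in\mathcal J^*$ satisfies $|A\cap\{a,b,c\}|\ge 2$: if $A\cap\{a,b,c\}=\emptyset$ then $A$ is disjoint from the member $\{a,b\}$, and if $A\cap\{a,b,c\}$ is a single vertex, say $\{a\}$, then $A$ is disjoint from the member $\{b,c\}$ — both contradicting that $\mathcal J^*$ is intersecting. Hence, since $\{a,b,c\}\subseteq J$, we get $\mathcal J\subseteq\{F:|F\cap\{a,b,c\}|\ge 2\}$, an isomorphic copy of $\mathcal A_2$ (the ``two out of three'' family from the introduction), so the first alternative holds.

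I do not expect a genuine obstacle in this proposition: it is a finite, elementary case analysis driven by the $2$-uniform intersecting classification. The only points needing care are the degenerate cases $\mathcal J^*=\emptyset$ and $\mathcal P=\emptyset$, where one of the two alternatives holds trivially or vacuously, and the check in the triangle case that one really lands inside a copy of $\mathcal A_2$ rather than merely something resembling it — which is exactly the identification $\mathcal A_2=\{F\in{[n]\choose k}:|F\cap[3]|\ge 2\}$ recorded in the introduction. The substantive work of the section comes afterwards, where this dichotomy is combined with the Dinur--Friedgut junta approximation to bound $\gamma(\ff)$ and to pin down the extremal families.
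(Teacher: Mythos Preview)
Your argument is correct and follows essentially the same line as the paper's: both reduce to the classical dichotomy that an intersecting family of sets of size at most $2$ is either pierced by a single element or is isomorphic to ${[3]\choose 2}$, and in the latter case every member of $\mathcal J^*$ meets $\{a,b,c\}$ in at least two elements, placing $\mathcal J$ inside a copy of $\mathcal A_2$. Your treatment is in fact slightly more careful than the paper's sketch, handling the degenerate cases ($\mathcal J^*=\emptyset$, a singleton present, $\mathcal P=\emptyset$) explicitly.
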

\begin{proof} Note that the intersecting families of $\le 2$-element sets which cannot be pierced by a single element are isomorphic to  ${[3]\choose 2}$. Therefore, the junta that does not fall into the second category must have the center of size $3$ and  be defined by a family containing all 2-element subsets of the center.  Then we are only left to observe the fact that $\mathcal A_2$ is a junta with center $J=[3]$ and defined by the family $\mathcal J^*={[3]\choose 2}\cup [3]$:

$$\aaa_2=\big\{A\in {[n]\choose k}: |A\cap [3]|\ge 2\big\}.$$
\end{proof}

Assume that $\mathcal J$ is not isomorphic to $\mathcal A_2$. Then, as it follows from the proposition above, $\gamma(\mathcal J)\le 2^{j}{n-j\choose k-3}$. If $C=n/k$ is sufficiently large, then $$2^j{n-j\choose k-3}\le \frac{2^j}{C}{n\choose k-2}<\frac{2^{j+1}}{C}{n-3\choose k-2}\le \frac{1}{2}{n-3\choose k-2}.$$
Moreover, ${n-5\choose k-4}< \frac 12{n-3\choose k-2}$ for any $n\ge 2k$. Therefore, in this case we can conclude that $$\gamma(\ff)\le \gamma(\mathcal J)+|\ff\setminus \mathcal J|<{n-3\choose k-2}.$$

From now on we suppose that $\mathcal J=\mathcal A_2$. For $i=1,2,3$ consider the families $\ff_i:=\{F\in \ff: F\cap [3]=\{i\}\}$. W.l.o.g., assume that $\ff_1$ has the largest size among $\ff_i$. We will use the following obvious bound: $\gamma(\ff)\le |\ff|-\delta_1$. Consider the following three families on $[4,n]$:
\begin{align*}\G:=&\big\{F\cap [4,n]: F\in \ff, F\cap [3]=\{2,3\}\big\},\\
\hh_1:=&\big\{F\cap [4,n]: F\in\ff_1\big\},\\
\hh_2:=&\big\{F: F\in\ff, F\subset [4,n]\big\}.
\end{align*}
 Clearly, $\G\subset {[4,n]\choose k-2}$, $\hh_1\subset{[4,n]\choose k-1}$, $\hh_2\subset {[4,n]\choose k}$.  Most importantly, $$\gamma(\ff)\le |\G|+|\ff_2|+|\ff_3|+|\hh_2| \le |\G|+2|\ff_1|+|\hh_2| = |\G|+2|\hh_1|+|\hh_2|.$$
Therefore, to conclude the proof of the theorem, it is sufficient to show the following two inequalities:
\begin{align}\label{eq05}|\mathcal G|+4|\hh_1|&\le {n-3\choose k-2},\\ \label{eq06}
|\mathcal G|+2|\hh_2|&\le {n-3\choose k-2}.\end{align}
Summing these two inequalities with coefficients equal to $1/2$, we get that $\gamma(\ff)\le {n-3\choose k-2}$.

There are two important properties that we are going to use. The first one is that $\ff\setminus \mathcal J=\ff_1\cup\ff_2\cup\ff_3\cup \hh_2$, and thus, using $|\hh_1|=|\ff_1|$, we have $|\hh_1|, |\hh_2|\le |\mathcal F\setminus \mathcal J|\le {n-5\choose k-4}$. The second one is that the pair of families $\G,\hh_1$ as well as $\G, \hh_2$ are {\it cross-intersecting}. We say that two families are {\it cross-intesecting}, if any set from one intersects any set from the other.

In what follows we show that \eqref{eq05}, \eqref{eq06} hold in a more general form. Similar inequalities appeared in \cite{KZ}, \cite{FK1} and \cite{Kup}.

\begin{lem}\label{lemkey} Consider a set $[m]$ and two cross-intersecting families $\mathcal A\subset{[m]\choose a}, \mathcal B\subset {[m]\choose b}$. Assume that $m>(C'+1)\cdot\max\{a, b\}$ for some constant $C'$. Assume also that $|\mathcal B|\le {m-(b-a+1)\choose a-1}$. Then \begin{equation}\label{eq666} |\mathcal A|+C'|\mathcal B|\le {m\choose a}.\end{equation}
\end{lem}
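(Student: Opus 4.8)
The plan is to exploit the cross-intersecting condition together with a shifting/compression argument to reduce to the case where $\mathcal{A}$ and $\mathcal{B}$ are "as structured as possible," and then count directly. Let me think about what structure cross-intersecting gives us.

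If $\mathcal{A}$ and $\mathcal{B}$ are cross-intersecting with $|\mathcal{B}| \le \binom{m-(b-a+1)}{a-1}$, then $\mathcal{B}$ is "small" — it has size at most that of a star-like family on sets of size $a$ (note the exponent matches: $a-1$ elements chosen from a ground set of size $m-(b-a+1)$). The key classical fact (Hilton–Milner-type / Frankl–Kupavskii cross-intersecting bounds, as cited) is: if $\mathcal{A}, \mathcal{B}$ are cross-intersecting and $|\mathcal{B}|$ is bounded, then $|\mathcal{A}|$ cannot be too close to $\binom{m}{a}$; more precisely one gets a trade-off of exactly the form $|\mathcal{A}| + C'|\mathcal{B}| \le \binom{m}{a}$ once $m$ is large relative to $a,b$.

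First I would apply the shift/compression operators $S_{ij}$ simultaneously to both $\mathcal{A}$ and $\mathcal{B}$. Standard arguments show that shifting preserves cross-intersection and does not decrease either $|\mathcal{A}|$ or $|\mathcal{B}|$ (sizes are unchanged), so we may assume both families are shifted (left-compressed). For shifted cross-intersecting families, there is a convenient structural dichotomy: either $\mathcal{B}$ is contained in a star (all sets contain element $1$, say), or $\mathcal{B}$ contains a "spread" set forcing $\mathcal{A}$ to be small. Concretely, if $\mathcal{B}$ is not a star, then after shifting some small set $\{1,2,\dots,t\}$-type obstruction appears and one can bound $|\mathcal{A}|$ crudely by $O\!\left(\binom{m-1}{a-1}\right) \cdot (\text{small factor})$; since $m$ is large compared to $a,b$, this crude bound already gives $|\mathcal{A}| + C'|\mathcal{B}| \le \binom{m}{a}$ with room to spare, because $|\mathcal{B}| \le \binom{m-1}{a-1}$ (roughly) and the deficit $\binom{m}{a} - \binom{m-1}{a-1} = \binom{m-1}{a}$ dominates everything.

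So the main case is when $\mathcal{B}$ (after shifting) is contained in the star $\{B : 1 \in B\}$. Then cross-intersection no longer automatically constrains $\mathcal{A}$ much, so here I would argue differently. Split $\mathcal{A} = \mathcal{A}_0 \cup \mathcal{A}_1$ where $\mathcal{A}_1 = \{A \in \mathcal{A} : 1 \in A\}$ and $\mathcal{A}_0$ is the rest. Every $A \in \mathcal{A}_0$ must intersect every $B \in \mathcal{B}$ inside $[2,m]$, so $\mathcal{A}_0' := \{A \setminus \{1\} : A \in \mathcal{A}_0\}$ (sets of size $a$ in $[2,m]$) is cross-intersecting with $\mathcal{B}' := \{B \setminus \{1\} : B \in \mathcal{B}\}$ (sets of size $b-1$ in $[2,m]$, of which there are $|\mathcal{B}|$ many). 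Now $|\mathcal{A}_1| \le \binom{m-1}{a-1}$ trivially, and the deficit $\binom{m}{a} - \binom{m-1}{a-1} = \binom{m-1}{a}$ must absorb $|\mathcal{A}_0| + C'|\mathcal{B}|$. So it suffices to prove $|\mathcal{A}_0'| + C'|\mathcal{B}'| \le \binom{m-1}{a}$ on the ground set $[2,m]$ of size $m-1$. This is now again a cross-intersecting pair but with $\mathcal{A}$-uniformity $a$ on a ground set of size $m-1$, and $\mathcal{B}$-uniformity $b-1$; one checks the hypothesis $|\mathcal{B}'| \le \binom{m-1-((b-1)-a+1)}{a-1} = \binom{m-1-(b-a)}{a-1}$ is implied by the original bound on $|\mathcal{B}|$ (since $\binom{m-(b-a+1)}{a-1} \le \binom{m-(b-a)}{a-1}$... actually I need $\binom{m-(b-a+1)}{a-1} \le \binom{(m-1)-(b-a)}{a-1} = \binom{m-(b-a+1)}{a-1}$ — equality!). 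So the hypothesis is preserved exactly, the ground set shrank by $1$, and uniformity $b$ dropped to $b-1$. This sets up a double induction: decreasing $b$ down to $b = a$, and when $b = a$ using a base case. I expect the main obstacle to be making this induction bookkeeping clean — in particular, handling the base case $b \le a$ (or $b=a$), where cross-intersecting families of equal uniformity have a sharp bound $|\mathcal{A}| + |\mathcal{B}| \le \binom{m}{a}$ when one of them is nonempty (Hilton–Milner-type), and bootstrapping the coefficient from $1$ up to $C'$ using the smallness of $|\mathcal{B}|$ and the largeness of $m$. A cleaner alternative avoiding induction: directly invoke the Frankl–Kupavskii cross-intersecting inequality from \cite{FK1} or \cite{Kup} in the exact form needed, verifying the parameter ranges — but since the paper says "we show that \eqref{eq05}, \eqref{eq06} hold in a more general form," I would write out the self-contained shifting argument above, with the non-star case dispatched by a crude bound and the star case by the one-step reduction plus induction on $b-a$.
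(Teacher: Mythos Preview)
Your plan has a real gap in the ``non-star'' branch. Suppose $\mathcal B$ is shifted but not a star at $1$; then $\{2,\dots,b+1\}\in\mathcal B$, which forces every $A\in\mathcal A$ to meet $[2,b+1]$, giving at best $|\mathcal A|\le b\binom{m-1}{a-1}$. Combining this with $|\mathcal B|\le\binom{m-1}{a-1}$ you would need $(b+C')\binom{m-1}{a-1}\le\binom{m}{a}=\frac{m}{a}\binom{m-1}{a-1}$, i.e.\ $m\ge a(b+C')$. The hypothesis only gives $m>(C'+1)\max\{a,b\}$, which is far weaker (think $a\approx b$ and $C'$ a fixed constant like $4$: you have $m\gtrsim C'a$ but need $m\gtrsim a^2$). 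So the ``crude bound already gives \dots\ with room to spare'' claim is false, and this branch of your dichotomy does not close. Your base case is also unresolved: at $b=a$ you still need to produce the factor $C'$, and ``bootstrapping from $1$ to $C'$'' is exactly the content of the lemma in that case.

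The paper avoids both problems in one stroke by replacing shifting with the lexicographic compression coming from Kruskal--Katona (Theorem~\ref{thmHil}). For lex-initial families the size bound $|\mathcal B|\le\binom{m-(b-a+1)}{a-1}$ forces \emph{every} $B\in\mathcal B$ to contain the entire block $[b-a+1]$, not just the single element $1$; there is no non-star case at all. Stripping $[b-a+1]$ off in one shot reduces directly to cross-intersecting $\mathcal A_0\subset\binom{Y}{a}$, $\mathcal B_0\subset\binom{Y}{a-1}$ on $Y=[b-a+2,m]$, and then the biregular bipartite Kneser inequality $\frac{|\mathcal A_0|}{\binom{|Y|}{a}}+\frac{|\mathcal B_0|}{\binom{|Y|}{a-1}}\le 1$ together with $\binom{|Y|}{a}/\binom{|Y|}{a-1}\ge C'$ finishes. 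Your inductive peeling of one element at a time, \emph{if} the star case held at every step, would after $b-a+1$ steps land you in exactly this situation---but you still need this bipartite/LYM-type bound as your base case, and you have not stated it. So the missing ingredients are: (i) use lex (not just shifts) so that the size hypothesis pins down $\mathcal B$ completely and the non-star branch disappears; (ii) for the residual problem with consecutive uniformities $a,a-1$, use the normalized independent-set bound in the bipartite Kneser graph.
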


Before proving the lemma, let us deduce the inequalities \eqref{eq05}, \eqref{eq06} out of \eqref{eq666} and thus conclude the proof of Theorem~\ref{thm1}. For \eqref{eq05} we need to substitute $\aaa:=\G,\ \bb:=\hh_1$,\ $a:=k-2,\ b:=k,\ C':=C,\ [m]:=[4,n]$. Then we conclude that \eqref{eq05} holds even with 4 replaced by  $C$. The deduction of \eqref{eq06} is similar. Moreover, we get that a pair of families may achieve equality in \eqref{eq05} and \eqref{eq06} only if $\hh_1=\hh_2=\emptyset$ (and therefore $\ff_i=\emptyset$ for $i\in[3]$). Therefore, if $\gamma(\ff)={n-3\choose k-2},$ then $\ff\subset \aaa_2$. \\

To prove Lemma~\ref{lemkey}, we need to give some definitions, related to the famous Kruskal-Katona theorem. A {\it lexicographical order} (lex) on the sets from ${[n]\choose k}$ is an order, in which $A$ is less than $B$ iff  the minimal element of $A\setminus B$ is less than the minimal element of $B\setminus A$.
 For $0\le m\le {n\choose k}$ let $\mathcal L(m,k)$ be the collection of $m$ largest sets with respect to lex.

\begin{thm}[\cite{Kr},\cite{Ka}]\label{thmHil}Suppose that $\mathcal A\subset {[n]\choose a}, \mathcal B\subset {[n]\choose b}$ are cross-intersecting. Then the families $\mathcal L(|\mathcal A|,a),\mathcal L(|\mathcal B|, b)$ are also cross-intersecting.
\end{thm}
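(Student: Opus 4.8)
The plan is to derive the theorem from the Kruskal--Katona theorem for shadows --- which is what \cite{Kr,Ka} actually prove --- by the standard device of trading ``cross-intersecting'' for ``disjoint from a fixed family''. First, we may assume $a+b\le n$, since otherwise every $a$-set meets every $b$-set and both the hypothesis and the conclusion are automatic. For a family $\mathcal F\subseteq{[n]\choose a}$ let
$$\mathcal D(\mathcal F):=\Big\{C\in{[n]\choose b}:C\cap F=\emptyset\ \text{ for some }\ F\in\mathcal F\Big\}$$
be the set of $b$-sets missing at least one member of $\mathcal F$. Then a family $\mathcal G\subseteq{[n]\choose b}$ is cross-intersecting with $\mathcal F$ exactly when $\mathcal G\cap\mathcal D(\mathcal F)=\emptyset$; in that case $\mathcal G$ and $\mathcal D(\mathcal F)$ are disjoint subsets of ${[n]\choose b}$, so $|\mathcal G|+|\mathcal D(\mathcal F)|\le{n\choose b}$. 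Applying this to the given $\mathcal A,\mathcal B$ yields the only consequence of the hypothesis we will use: $|\mathcal B|+|\mathcal D(\mathcal A)|\le{n\choose b}$.

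The point of passing to $\mathcal D$ is that $C\cap F=\emptyset\iff F\subseteq[n]\setminus C$, so $\mathcal D(\mathcal F)$ is precisely the set of complements of the $(n-b)$-element supersets of members of $\mathcal F$ (here $a\le n-b$ is used); equivalently, after complementing every set, $\mathcal D(\mathcal F)$ becomes an iterated \emph{upper} shadow of $\mathcal F$. Hence two facts about $\mathcal D$ follow from Kruskal--Katona in its upper-shadow, lexicographic form: (i) among $a$-uniform families of a fixed size, $|\mathcal D(\cdot)|$ is minimized by the lexicographic initial segment, so $|\mathcal D(\mathcal A)|\ge|\mathcal D(\mathcal L(|\mathcal A|,a))|$; and (ii) the upper shadow of a lex initial segment is again a lex initial segment, and since complementation reverses the lexicographic order of the statement, it follows that $\mathcal D(\mathcal L(|\mathcal A|,a))$ is a \emph{final} segment of the lex order on ${[n]\choose b}$, i.e.\ ${[n]\choose b}\setminus\mathcal D(\mathcal L(|\mathcal A|,a))=\mathcal L(m',b)$ for some $m'$.

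Given (i) and (ii), the conclusion is immediate. The family $\mathcal L(|\mathcal B|,b)$ is an initial segment of ${[n]\choose b}$ of size $|\mathcal B|$, while $\mathcal D(\mathcal L(|\mathcal A|,a))$ is a final segment of size $|\mathcal D(\mathcal L(|\mathcal A|,a))|\le|\mathcal D(\mathcal A)|\le{n\choose b}-|\mathcal B|$ by (i) and the hypothesis. An initial segment and a final segment of the ${n\choose b}$-element set ${[n]\choose b}$ are disjoint as soon as their sizes sum to at most ${n\choose b}$; hence $\mathcal L(|\mathcal B|,b)\cap\mathcal D(\mathcal L(|\mathcal A|,a))=\emptyset$, i.e.\ $\mathcal L(|\mathcal A|,a)$ and $\mathcal L(|\mathcal B|,b)$ are cross-intersecting.

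The one genuinely delicate point is matching up the orientations in (i)--(ii). The Kruskal--Katona theorem is most often stated for the \emph{lower} shadow and the \emph{colexicographic} order, whereas here it is the \emph{upper} shadow and the dictionary order of the statement that matter; one must pass between the two by taking complements (complementation interchanges lower and upper shadows and reverses each order) and keep in mind that the family minimizing $|\mathcal D(\cdot)|$ is honestly a lexicographic --- not colexicographic --- initial segment. If one prefers not to quote Kruskal--Katona as a black box, (i) can instead be obtained by a compression argument: a single step pushing $\mathcal F$ ``towards'' the lex order does not increase $|\mathcal D(\mathcal F)|$, and iterating drives $\mathcal F$ to $\mathcal L(|\mathcal F|,a)$; this essentially reproves the relevant case of the theorem. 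Similarly (ii) can be checked directly, since $C\in\mathcal D(\mathcal L(|\mathcal A|,a))$ iff the lexicographically least $a$-subset of $[n]\setminus C$ already lies in $\mathcal L(|\mathcal A|,a)$, and a short greedy comparison shows this condition is monotone along the lex order on ${[n]\choose b}$.
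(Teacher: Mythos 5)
The paper does not prove Theorem~\ref{thmHil}; it only cites it (the result is usually attributed to Hilton, as a consequence of the Kruskal--Katona theorem), so there is no in-text proof to compare against. Your derivation is correct and is essentially Hilton's original argument: translate ``cross-intersecting with $\mathcal F$'' into ``disjoint from $\mathcal D(\mathcal F)$,'' observe that up to complementation $\mathcal D(\mathcal F)$ is an iterated upper shadow so Kruskal--Katona gives $|\mathcal D(\mathcal L(|\mathcal A|,a))|\le|\mathcal D(\mathcal A)|$, and then use that $\mathcal D$ of a lex initial segment is a lex final segment so that an initial and a final segment with sizes summing to at most $\binom{n}{b}$ must be disjoint. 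You correctly identify the only delicate point (matching up lower vs.\ upper shadow and lex vs.\ colex under complementation and the relabeling $i\mapsto n+1-i$), and the monotonicity argument you sketch for claim (ii) --- that $C\mapsto$ ``the lex-least $a$-subset of $[n]\setminus C$'' is order-reversing --- is the right way to verify it directly. One small stylistic remark: the paper's definition of $\mathcal L(m,k)$ says ``largest'' but, as its use in the proof of Lemma~\ref{lemkey} makes clear (all sets of $\mathcal L(|\mathcal B|,b)$ there contain $[b-a+1]$), what is meant is the initial segment beginning with $\{1,\dots,k\}$; your proof uses this intended reading consistently.
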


\begin{proof}[Proof of Lemma~\ref{lemkey}]
Using Theorem~\ref{thmHil}, we may w.l.o.g. assume that $\aaa = \mathcal L(|\mathcal A|,a), \bb = \mathcal L(|\mathcal B|, b)$. Due to the restriction on the size of $\bb$, any set in it contains $[b-a+1]$. Consider the families
\begin{align*}\mathcal B_0:=&\{B\setminus [b-a+1]:B \in \bb\},\\ \mathcal A_0:=&\{A: A\in \aaa, A\cap [b-a+1]=\emptyset \}.\end{align*}
 Put $Y:=[b-a+2,m]$ (if $b<a$, put $Y:=[1,m]$). Note that $|Y| = \min\{m-(b-a+1), m\}$.  Clearly, $\bb_0\subset{Y\choose a-1}$ and $\aaa_0\subset {Y\choose a}$. Consider a bipartite graph $G$ with parts ${Y\choose a},\ {Y\choose a-1}$, and edges connecting disjoint sets. Then the intersection of $\aaa\cup \bb$ with the parts of the graph is $\aaa_0\cup\bb_0$, and it forms an independent set in $G_0$. Thus, we have
 $$\frac{|\aaa_0|}{{|Y|\choose a}}+\frac{|\bb_0|}{{|Y|\choose a-1}}\le 1.$$
We have ${|Y|\choose a}/{|Y|\choose a-1} = (|Y|-a)/a = \min\{m-a, m-b-1\}/a\ge C'$.  This implies
$$|\aaa_0|+C'|\bb_0|\le |\aaa_0|+\frac{{|Y|\choose a}|\bb_0|}{{|Y|\choose a-1}}\le {|Y|\choose a}.$$
The lemma follows from the fact that
$$|\aaa|+ C'|\bb|\le {m\choose a}-{|Y|\choose a}+|\aaa_0|+C'|\bb_0|\le {m\choose a}.$$
\end{proof}




\section{What happens when $2k\le n\le 3k$?}

Under the same assumption that we make in Theorem~\ref{thm1}, it is possible to prove certain Hilton-Milner type stability results for diversity (using more elaborate versions of Lemma~\ref{lemkey}). However, we think that it is more interesting to resolve the problem for any  $n> 3k$ and show that the family with the maximum possible diversity must be isomorphic to a subfamily of $\aaa_2$, or the ``two out of three'' family. When $2k<n< 3k$, then other families have larger diversity. They can be described as ``$r+1$ out of $2r+1$'' families:
\begin{equation}\mathcal D_r:=\big\{D\in {[n]\choose k}: |D\cap [2r+1]|\ge r+1 \big\}, \ \ \ \ \ \ \ \ r=1,\ldots, k-1.\end{equation}

The following seems to be a reasonable conjecture at a first glance.
\vskip+0.1cm

\textbf{Conjecture } {\it Fix $n\ge 2k> 0$ and consider an intersecting family $\ff\subset {[n]\choose k}$.  If for some $r\in \mathbb Z_{\ge 0}$ we have
$(k-1)\big(2+\frac 1{r+1}\big)+1\le n\le (k-1)\big(2+\frac 1r\big)+1$, then $\gamma(\ff)\le \gamma(\mathcal D_r)$.}\vskip+0.1cm

Substituting $r=0$ in the conjecture, we get that $\gamma(\ff)\le {n-3\choose k-2}$ for any $n\ge 3k-2$. Let us explain what stands behind this naive conjecture. Assume that the  element with the highest degree in $\ff$ is $1$. Then the conjecture is just stating that, if one restricts the attention to the family $\ff':=\{F\in \ff: 1\notin F\},\ \ff\subset {[2,n]\choose k}$, then the size of $\ff'$ is at most the size of the largest 2-intersecting family on $[2,n]$. We say that a family is {\it $t$-intersecting}, if any two sets from the family intersect in at least $t$ elements. The exact formulas given in the naive conjecture come from the famous Complete Intersection Theorem by Ahlswede and Khachatrian \cite{AK}.

The families $\mathcal D'_r\subset{[2,n]\choose k}, \mathcal D'_r:=\{D\in \mathcal D_r: 1\notin D\}$ are 2-intersecting. And it comes as no surprise. Indeed, the same must be true for any {\it shifted} intersecting family $\ff$. Let us first give the definition of shifting. \\

For a given pair of indices $1\le i<j\le n$ and a set $A \subset [n]$ define its {\it $(i,j)$-shift} $S_{i,j}(A)$ as follows. If $i\in A$ or $j\notin A$, then $S_{i,j}(A) = A$. If $j\in A, i\notin A$, then $S_{i,j}(A) := (A-\{j\})\cup \{i\}$. That is, $S_{i,j}(A)$ is obtained from $A$  by replacing $j$ with $i$.

The  $(i,j)$-shift $S_{i,j}(\mathcal F)$ of a family $\mathcal F$ is as follows:

$$S_{i,j}(\mathcal F) := \{S_{i,j}(A): A\in \mathcal F\}\cup \{A: A,S_{i,j}(A)\in \mathcal F\}.$$

We call a family $\mathcal F$ \textit{shifted}, if $S_{i,j}(\mathcal F) = \mathcal F$ for all $1\le i<j\le n$.\\

For any shifted family $\delta_1(\ff)=\Delta(\ff)$ and, if $\ff$ is intersecting, then $\ff'$ must be 2-intersecting. Indeed, if there are two sets $F_1,F_2\in \ff$, such that $F_1,F_2\subset [2,n]$ and $F_1\cap F_2 = \{x\}$, then, by shiftedness, $F'_1:=F_1\setminus \{x\} \cup \{1\}$ also belongs to $\ff$, and we have $F_1'\cap F_2 = \emptyset$, a contradiction. Consequently, the naive conjecture is true for such $\ff$).

Therefore, the conjecture above states that {\it any} intersecting family should behave as shifted intersecting families with respect to diversity.
Shifting preserves the property of a family to be intersecting, but, unfortunately, it does not allow to control the diversity of a family. This is why the general case cannot be directly reduced to shifted case. In fact, it cannot be reduced to the shifted case at all: the conjecture is false for families that are not shifted!

\subsection{Intersecting families with the largest diversity are not shifted}
Here we present a counterexample to the conjecture above found and communicated to us by Noam Lifshitz. As the counterexample shows, at least in some cases the extremal value of $\gamma(\ff)$ is attained on the families that are not shifted, which is unexpected for a problem concerning intersecting families.\\

We use the notions and results coming from the analysis of Boolean functions. We give all the necessary definitions, and all the standard results used here may be found in \cite{OD}. For a real number $0<p<1$ and a set $F\subset [n]$ and $\ff\subset 2^{[n]}$, define the $p$-biased measure $\mu_p(F):=p^{|F|}(1-p)^{n-|F|}$ and $\mu_p(\ff):=\sum_{F\in\ff}\mu_p(F)$. The {\it influence} $I_i^p(\ff)$ of coordinate $i$ in $\ff$ is $$I_i^p(\ff):=\mu_p\big(\{F: |\{F, F\Delta\{i\}|\cap \ff=1\}\big),$$ and the {\it total influence} is $I^p(\ff):=\sum_i I_i^p(\ff)$. In case if $\ff$ is closed upwards, we have \begin{equation}\label{eqinfmonotone} I_i^p(\ff) = p^{-1}\mu_p\big(\{F\in \ff:i\in F\}\big)- (1-p)^{-1}\mu_p\big(\{F\in \ff:i\notin F\}\big).\end{equation}

Fix a sufficiently large $r$ and even bigger $k\ge k_0(r)$, $n\ge n_0(r)$, satisfying the conditions on $n$ from the conjecture. Put $p:=\frac{k}{n}$. That is, $p =\frac 12 - (1+o(1))\frac 1r$.\\

\textbf{Intersecting family with low influences. } In what follows, we describe the family $\mathcal T_r$, which restriction $\mathcal T_r\cap J$ (see below)  provides an example showing that the Kahn-Kalai-Linial inequality~\cite{KKL} is sharp for indicator functions of intersecting families. The family $\mathcal T_r$ has larger diversity than  $\mathcal D_r$. The example is taken from Gil Kalai's post on MathOverflow \cite{GKM}, however, since the explanation of the necessary properties in the post was very brief, we expand the exposition here, hopefully providing all the necessary details.

Consider an intersecting family $\mathcal T_r\subset {[n]\choose k}$, which is a $(2r+1)$-junta with center  $J:=[2r+1]$, and $\mathcal T_r\cap J$ is the following intersecting family. Arrange the elements of $J$ on the circle, and for each set $S\subset 2^{J}$ form a sequence $\mathbf{u}:=(u_1,u_2,\ldots)$, where $u_i$ is the length of the $i$-th longest run of consecutive $1$'s; similarly, $\mathbf{z}:=(z_1,z_2,\ldots )$ is the sequence, in which $z_i$ is the $i$-th longest run of consecutive $0$'s.  Form $\mathcal T_r\cap J$ by including all sets, for which its sequence $\mathbf{u}$ is lexicographically bigger than $\mathbf{z}$ (we denote it $\mathbf{u}\succ\mathbf{z}$). Note that, since $|J|$ is odd, we cannot have equality between the sequences. Therefore, we have $$|\mathcal T_r\cap J|=2^{|J|-1}$$ since if $T\subset J$ is in $\mathcal T_r\cap J$, then its complement $J\setminus T$ is not, and vice versa.

Let us show that $\mathcal T_r\cap J$ is an intersecting family. Assume the contrary, and let $T_1,T_2$ be two disjoint sets in $\mathcal T_r\cap J$. Let $\mathbf{u}^i$, $\mathbf{z}^i$, $i=1,2$, be the corresponding one and zero runs sequences. Then, clearly, $\mathbf{z}^1\succ\mathbf{u}^2$ and $\mathbf{z}^2\succ\mathbf{u}^1$, otherwise, it would be impossible to fit the runs of $1$'s of $T_1$ inside the runs of $0$'s of $T_2$ (and the same with the roles of $T_1$, $T_2$ interchanged). However, if, say, $\mathbf{u}^1\succ\mathbf{u}^2$, then by transitivity $\mathbf{z}^2\succ\mathbf{u}^2$, a contradiction.

In what follows, all logarithms have base $2$ and all asymptotic notations are with respect to $r\to \infty$.

\begin{lem} \label{leminf} For each $i\in J$, we have $I_i^{1/2}(\mathcal T_r\cap J)= O\big(\frac{\log r}{r}\big)$.
\end{lem}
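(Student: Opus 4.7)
The plan is to exploit the cyclic symmetry of $\mathcal T_r \cap J$ to reduce the per-coordinate bound to a total-influence bound, and then to bound the expected number of pivotal coordinates by a direct analysis of the run structure of a random $S\subseteq J=[2r+1]$. Rotating $S$ around the circle preserves the sorted run-length sequences $\mathbf u(S)$ and $\mathbf z(S)$, and hence preserves membership in $\mathcal T_r\cap J$; thus the influences $I_i^{1/2}(\mathcal T_r\cap J)$ are all equal. Writing $f := \mathbf 1_{\mathcal T_r\cap J}$, this gives $(2r+1)\,I_1^{1/2}(f) = I^{1/2}(f) = \mathbb E_{S\sim\mu_{1/2}}|P(S)|$, where $P(S) := \{i\in J : f(S\triangle\{i\})\neq f(S)\}$. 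The lemma is therefore equivalent to showing $\mathbb E|P(S)| = O(\log r)$.

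The self-duality $f(J\setminus S) = 1-f(S)$ lets me condition on $f(S)=1$, i.e.\ $\mathbf u\succ\mathbf z$. I first treat the (overwhelmingly typical) sub-case $u_1>z_1$, where the key observation is that flipping a $0$-bit to a $1$-bit can only weakly increase $u_1$ and weakly decrease $z_1$, so $u_1'>z_1'$ persists and no $0$-bit is pivotal. Hence I only need to count pivotal $1$-bits $i\in S$, and any such $i$ falls into one of three combinatorial categories according to its location in the run structure: (A) $i$ lies in the interior of the longest $1$-run $R_1$ and its split produces two pieces of length $\le z_1$, which forces $i$ into a window of length $O(z_1)$ within $R_1$; (B) $i$ is an end bit of some $1$-run adjacent to a $0$-run of length $\ge u_1-1$, so flipping extends that $0$-run past $u_1$; or (C) $i$ is a singleton $1$-run between two $0$-runs of lengths $m_1,m_2$ with $m_1+m_2\ge u_1-1$, so the merge creates a $0$-run of length $\ge u_1$.

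Standard estimates on a uniform random binary cyclic string of length $n = 2r+1$ then close the bound: $u_1, z_1 = (1+o(1))\log_2 n$ with high probability, the expected number of maximal runs of length $\ge L$ is $O(n\,2^{-L})$, and the expected number of singleton $1$-runs whose two adjacent $0$-runs have total length $\ge L$ is $O(n\,L\,2^{-L})$ (the latter from convolving two geometric tails). Plugging in $L\approx u_1\sim\log_2 r$, these yield expected contributions $O(\log r)$, $O(1)$, and $O(\log r)$ from (A), (B), (C) respectively, so the case $u_1>z_1$ contributes $O(\log r)$ to $\mathbb E|P(S)|$.

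The step I expect to be the main obstacle is the tie case $u_1=z_1$, where $\mathbf u\succ\mathbf z$ is decided at some deeper index $t\ge 2$. There the pivotal analysis propagates to flips that touch the $t$-th longest runs, and one needs a joint tail estimate for several of the top run lengths at once, controlling both the depth $t$ at which a decision is reached and which runs a given bit flip can affect. However, $u_t$ and $z_t$ are also $O(\log r)$ with high probability for any $t$ that can actually decide the comparison, and the three-way categorization above applied to the runs at the decision depth yields the same asymptotic bound. Summing all cases gives $I^{1/2}(f) = O(\log r)$, and dividing by $2r+1$ yields $I_i^{1/2}(\mathcal T_r\cap J) = O(\log r /r)$ for each $i\in J$.
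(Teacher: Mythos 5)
Your overall strategy matches the paper's: use the cyclic symmetry to reduce to a total-influence bound, identify the total influence with the expected number of pivotal coordinates, and control that expectation through the run-length structure of a random cyclic string. Your analysis of the case $u_1>z_1$ (the three categories of pivotal $1$-bits and the standard tail estimates for run lengths) is sound and in fact more explicit than what the paper writes for that step.

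The genuine gap is exactly where you flag "the main obstacle" and then assert the conclusion: the tie case. When $u_j=z_j$ for $j\le \rho$ and the comparison is decided only at depth $\rho+1$, a bit in the interior of \emph{any} of the $\rho+1$ top runs of ones can be pivotal (shortening a tied run $u_j$ immediately makes $\mathbf u$ lex-smaller), so the number of pivotal coordinates is of order $\sum_{j\le \rho+1}u_j \approx \rho\log r$. To get $\mathbb E|P(S)|=O(\log r)$ you therefore need $\sum_{k\ge 1}\Pr[\rho\ge k]=O(1)$, i.e.\ a quantitative tail bound on the \emph{decision depth} $\rho$ — the probability that the sorted run-length sequences of zeros and ones agree in their first $k$ entries. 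Your observation that $u_t,z_t=O(\log r)$ with high probability says nothing about how many levels are tied; it controls the size of each run, not the number of runs a flip can affect. The paper devotes a separate, fairly delicate argument (its Lemma~\ref{lem10}, proved by a revealing/conditioning scheme on the runs of length $\ge t$ and a recursion) to establish $\Pr[\rho\ge k]\le e^{-\alpha\log^2 k}+o(1/r)$, which is what makes the sum converge. Without some estimate of this kind your argument does not close, so as written the proposal is incomplete at its hardest step.
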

\begin{proof}The proof requires a somewhat tedious analysis of the typical sets in the family. The family is clearly transitive on $J$, and thus it is sufficient to show that $I^{1/2}(\mathcal T_r\cap J)=O(\log r)$. The total influence $I^{1/2}(\mathcal T_r\cap J)$ is the average number of {\it pivotal coordinates} in a randomly chosen set from $J$ according to $\mu_{1/2}$, that is, the number of coordinates which change results in the set passing from $\mathcal T_r\cap J$ to its complement or vice versa.

Choose a random set $T\in  J$ according to $\mu_{1/2}$ and denote its zero and one runs sequences $\mathbf{z}:=(z_1,z_2,\ldots )$ and $\mathbf{u}:=(u_1,u_2,\ldots )$, respectively. The coordinate is pivotal, if after its change the lexicographical order of $\mathbf z$ and $\mathbf u$ is reversed.

%
Using first moment, it is easy to see that with probability $1-o(1/r)$ the largest run of consecutive $1$'s in $T$ has size at most  $(2+o(1))\log r$, and the same for the runs of zeros. Thus, the sequences not satisfying this property contribute $o(1)$ to the total influence. In what follows we ignore such sequences.


In what follows, we assume that $\mathbf u\succ \mathbf z$. The other case is treated analogously.
Choose $\rho$ such that $z_{\rho}=u_{\rho}$ for $j\le \rho$ and $u_{\rho+1}>z_{\rho+1}$. The lex order is reversed if at least one of the two happens: either $(u_1,\ldots,u_{\rho+1})$ is replaced by a lexicographically smaller sequence, or $(z_1,\ldots, z_{\rho+1})$ is replaced by a lexicographically larger sequence. Denote the number of the former and latter types by $s_1$ and $s_2$, respectively.
The number of pivotal coordinates is at most $s_1+s_2$, but we will bound just $s_1$ instead. A pivotal coordinate $k$ of the second type is a pivotal coordinate of the first type for the set $T\setminus\{k\}$, and, since $\mu_{1/2}(T) = \mu_{1/2}(T\setminus\{k\})$, the average value of $s_1$ is not more than twice smaller than $s_1+s_2$, Thus if $\mathrm E[s_1]=O(\log r)$, then $\mathrm E[s_1+s_2] = O(\log r)$.



From the above, we clearly have $s_1\le \sum_{j=1}^{\rho+1} u_{\rho}$. Moreover, $u_i=O(\log r)$ for each $i=1\ldots, \rho+1$. Consequently, we can bound $\mathrm E[s_1]\le \mathrm E[\sum_{j=1}^{\rho+1}O(\log r)]= O(\log r)\sum_{k=1}^{\infty}\Pr[\rho\ge k]$. We use a slight  variation of this bound. Since the number of pivotal coordinates is at most $2r+1$, we can bound \begin{equation}\label{eqevil}\mathrm{E} [s_1]\le O(\log r)\sum_{k=1}^{r^{0.1}}\Pr[\rho\ge k] +(2r+1)\Pr[\rho\ge r^{0.1}].\end{equation}
To complete the proof that $\mathrm{E} [s_1]=O(\log r)$ (and thus the proof of the lemma), it is  sufficient to show the validity of the following lemma
\begin{lem}\label{lem10}
For any $\rho\le r^{0.1}$ we have $\Pr[\rho\ge k]\le e^{-\alpha \log^2k}+o(1/r)$ for some $\alpha<1$. In particular, $\Pr[\rho\ge r^{\beta}]=o(1/r)$ for any fixed $\beta>0$.
\end{lem}
We defer the proof of this lemma to the end of the section, and finish the proof of Lemma~\label{leminf} modulo Lemma~\ref{lem10}. From it, we have $\Pr[\rho\ge r^{0.1}]=o(1/r)$, and the right hand side of the inequality \eqref{eqevil} is at most $O(\log r)\sum_{k=1}^{\infty} e^{-\alpha \log^2 k}+o(1) = O(\log r).$
\end{proof}

Consider the family $\mathcal T^{\uparrow}_r:=\{T\subset [n]:T\cap J\in \mathcal T_r\cap J\}$. Then we have $\mu_{1/2}(\mathcal T^{\uparrow}_r)=1/2$. Similarly, define the family $\mathcal D^{\uparrow}_r$ based on $\mathcal D_r$. Again, $\mu_{1/2}(\mathcal D^{\uparrow}_r)=1/2$. Define the {\it $p$-biased diversity} of $\ff$ as $\gamma_p(\ff):=\min_{i\in [n]}\mu_p(\{F\in \ff:i\notin F\})$.

Using the result of Dinur and Safra \cite{DS}, for sufficiently large $n=n(r)$ we have \begin{align}
\label{eqmeasureapprox2} \Big|\gamma_p(\mathcal T^{\uparrow}_r)-\frac{\gamma(\mathcal T_r)}{{n\choose k}}\Big|\le & \frac 1{r^2},
\end{align}
and the same for $\mathcal D_r$ and $\mathcal D^{\uparrow}_r$. (Note that we use the fact that both families $\mathcal T_r\cap\{T\subset J: 1\notin T\}$ and $\mathcal D_r\cap\{T\subset J: 1\notin T\}$ are $2r+1$-juntas.)
Using the Margulis-Russo lemma, for any upwards closed family $\ff\subset 2^{J}$ and $p_0\in(0,1)$, we have
$$\frac{d \mu_p(\ff)}{d p}|_{p_0}=I^{p_0}(\ff).$$
Using large deviation estimates, it is not difficult to see, that for any $p_0\in [p,1/2]$ the contribution of sets from $\ff$ of size not in $[r-r^{2/3}, r+r^{2/3}]$ to the influence is negligible (since the measure of such sets is negligible). On the other hand, for any $F$ of size in  $[r-r^{2/3}, r+r^{2/3}]$, we have $\mu_{p_0}(F)=(1+o(1))\mu_{1/2}(F)$ for any $p_0\in [p,1/2]$. Thus, $I^{p_0}(\ff)=(1+o(1))I^{1/2}(\ff)$ for any such $p_0$. We conclude that we have $$\mu_{1/2}(\ff)-\mu_{p}(\ff)=(1+o(1))(1/2-p)I^p(\ff).$$
At the same time, for a symmetric, closed upward  family $\ff\subset 2^{J}$ and for any $i\in J$ we have $pI_i^p(\ff)+\frac 1{1-p}\gamma_p(\ff) = \mu_p(\ff).$ Therefore,
$$\gamma_p(\ff)= (1-p)\big(\mu_p(\ff)-\frac{p}{|J|}I^p(\ff)\big) = (1-p)\mu_{1/2}(\ff)-(1-p+o(1))\frac{p+\big(\frac 12-p\big)|J|}{|J|}I^p(\ff).$$

We know that $I^p(\mathcal D_r\cap J)=\Omega(\sqrt r)$, since $\mathcal D_r\cap J$ is the majority function. On the other hand, using Lemma~\ref{leminf}, we have $I^p(\mathcal T_r\cap J)=O(\log r).$ Using the displayed formula above, we get
$$\gamma_p(\mathcal D^{\uparrow}_r)=\gamma_p(\mathcal D_r\cap J)=\frac {1-p}2-\Omega\big(\frac{1}{\sqrt r}\big) \ \ \ \ \text{and} \ \ \ \ \gamma_p(\mathcal T^{\uparrow}_r)=\gamma_p(\mathcal T_r\cap J)=\frac {1-p}2-O\big(\frac{\log r}{r}\big).$$
Combining the formulas above with \eqref{eqmeasureapprox2}, we conclude that $\gamma(\mathcal D_r)<\gamma(\mathcal T_r)$. \\

\begin{proof}[Proof of Lemma~\ref{lem10}]

How to express the condition that for a random sequences its vectors of zero and one runs share the first $k$ coordinates? Let $N(t)$ be a random variable counting the number of runs of length at least $t$ in the sequence. Choose an integer $t_0$ such that $N(t_0)\le k$, but $N(t_0-1)>k$. In order to have $\rho\ge k$, exactly a half of each $N(t)$, $t\ge t_0$, must be zero runs, and a half must be one runs. In what follows, we analyze the behaviour of $N(t)$.

Take an integer $t$ and fix the value of $N(t)$. For each run of length at least $t$, reveal the values of the first $t$ coordinates that belong to the run (in the clockwise order), as well as the value that precedes the run clockwise. E.g., for $t=3$ the sequence may look like $xx0111xxx1000xx1000x\ldots$, where $x$ stands for the coordinates that are not revealed. Let us denote $L_j$, $j\in[N(t)],$ the intervals of unrevealed coordinates between the revealed coordinates. If some $L_j$ has length smaller than $r^{1/10}$, then  reveal its coordinates, otherwise  keep it intact. Let us denote $\mathcal S$ the class of all possible subsequences that can be fixed (revealed) in this way. Each subsequence $S\in \mathcal S$ gives rise to a family $\mathcal C(S)$ of sequences containing $S$ as a subsequence.  Fix any subclass $\mathcal S'\subset \mathcal S$. Then for a randomly chosen cyclic sequence $R$ we have
\begin{equation}\label{fullprob}\Pr[\rho\ge k]\le \sum_{S\in \mathcal S\setminus \mathcal S'}\Pr[\rho\ge k|R\in \mathcal C(S)]\Pr[R\in \mathcal C(S)]+\sum_{S\in\mathcal S'}\Pr[R\in \mathcal C(S)].\end{equation}
That is, one may think of $\mathcal S'$ as a small set of ``exceptional'' classes. We will show that the latter term on the right hand side is $o(1/r)$, while $\Pr[\rho\ge k|R\in \mathcal C(S)]\le e^{-\alpha \log^2 k}$ for each $S\in \mathcal S\setminus \mathcal S'$. Since $\sum_{s\in  \mathcal S\setminus \mathcal S'}\Pr[R\in \mathcal C(S)]\le 1$, this will conclude the proof of the lemma.

  We note that the expected value of $N(t)$ is $r2^{-t}$ (we choose the starting point, choose arbitrarily the coordinate $x$ before the starting point, and then fix the $t$ coordinates to be equal to $1-x$). Moreover, using the Talagrand inequality (e.g., in the form of \cite[Theorem 7.7.1]{AS}), we can show that when the expectation is, say, bigger than $r^{1/10}$, then the value of $N(t)$ is well-concentrated around the expectation (it is equal to $(1+o(1))\mathrm E[N(t)]$ with probability at least $1-r^{-c}$ for any $c>0$). We restrict our attention only on the values of $t$ such that the expected value of $N(t)$ does not exceed $r^{1/10}$, and we include in $\mathcal S'$ all sequences for which the value of $N(t)$ exceeds $2r^{1/10}$. By the above, there are $o(1/r)$ of those.

  Fix some $t$ satisfying the condition $\mathrm E[N(t)]\le r^{1/10}$ and note that $t=\Omega(\log r)$.  The probability that there is more than one interval $L_j$ of unrevealed coordinates of length $\le r^{1/10}$, given that $N(t)\le 2r^{1/10}$, is $o(1/r)$. Indeed,  compare the number of all possible choices for the starting positions  of the runs of length $\ge t$ (roughly ${r\choose N(t)}$) with the number of choices, in which we first fix $N(t)-2$ starting positions of such runs, and then choose the remaining two at distance at most $2r^{1/10}$ from one of the already chosen ones (at most ${r\choose N(t)-2}\cdot (4 r^{1/5})^2$). Include all such subsequences in $\mathcal S'$. We are not going to include any more subsequences in $\mathcal S'$ and we note that there are $o(1/r)$ sequences that contain subsequences from $\mathcal S'$.


  Fix one subsequence $S\in \mathcal S\setminus \mathcal S'$. We aim to bound $\Pr[\rho\ge k|R\in \mathcal C(S)]$. Consider a uniform distribution over all sequences in $\mathcal C(S)$. Recall that at least $[N(t)]-1$ of $L_j$ are unrevealed. We note that the only restriction on the choices of coordinates in $L_j$ is that they cannot contain runs of ones or zeros of length $\ge t$, moreover, there is no dependency between the choices of coordinates in different $L_j$.  Next, for each $j$ we reveal the first coordinate $x_j$ of $L_j$ in the clockwise order. We claim that $\Pr[x_j=1]=(\frac 12+o(1))$. Indeed, for each admissible sequence starting from $x_j$ we change $x_j$ to $1-x_j$ and obtain an admissible sequence and vice versa, unless the $t-1$ elements immediately following $x_j$ have the same value, while $x_j$ had the opposite value. But this constitutes at most a $1/2^{t}$-fraction of all possible admissible sequences on $L_j$, and thus only affects the value of $\Pr[x_j=1]$ by $o(1)$.

  Since the choices for different $x_j$ are independent, we have the following. First, the expected number $\mathrm E[N(t+1)]$ of ``surviving'' runs of length $t+1$ is $(\frac 12+o(1))N(t)$. Moreover, it is tightly concentrated around the expectation: using a Chernoff-type bound, we have $\Pr[|N(t+1)-\frac 12N(t)|\ge \frac 16 N(t)]\le e^{-cN(t)}$ for some fixed positive constant $c$.

  Now we are in position to bound $\Pr[\rho\ge k|R\in \mathcal C(S)]$. First, we find $t_0$ as in the first paragraph of the proof of the lemma. By the previous paragraph, with probability at least $1-e^{-ck}$ for some fixed $c>0$ we have $3N(t_0)\ge k$ and $27 N(t_0+2)\ge k$. In order for $\rho\ge k$ to hold, a half of runs contributing to $N(t_0)$ should be one runs, and the other half should be zero runs. Moreover, the same should be true for $N(t)$ for $t\ge t_0+1$. Thus, we can obtain the following rough bound:\begin{small}
  $$\Pr[\rho\ge k|R\in \mathcal C(S)]\le \Pr[\rho\ge N(t_0+2)|R\in \mathcal C(S)]\cdot P_1\le e^{-ck}+\Pr\big[\rho\ge \frac k{27}|R\in \mathcal C(S)\big]\cdot P_1,$$
  \end{small}
  where $P_1$ is the probability that the values $x_j$ will be chosen in such a way that the number of zero and one runs of length at least $t_0+1$ is the same. It is easy to see that
  $$P_1\le (1+o(1))2^{-N(t_0)}\sum_{j=0}^{N(t_0)/2}{N(t_0)/2\choose j}^2=\Theta\big(N(t_0)^{-1/2}\big).$$
  (note here that we have to take into account the interval $L_j$ that was fixed  and that potentially gave one zero or one run of length $t_0$, but it does not affect the validity of the bound above). Looking at the recursion $\Pr[\rho\ge k]\le e^{-ck}+\Theta(k^{-1/2})\Pr[\rho\ge k/27]$, it is not difficult to conclude that $\Pr[\rho\ge k]\le e^{-\alpha\log^2k}$ for some positive constant $\alpha$. Substituting into \eqref{fullprob}, we get the result.
  \end{proof}

We remark that the problem treated in Lemma~\ref{lem10} seems to be interesting on its own and that it would be desirable to have a fuller understanding of the behaviour of the probability $\Pr[\rho\ge k]$. However, we believe that the bound on the probability (modulo $o(1/r)$ and the constant $\alpha$ in the exponent) is essentially sharp.

\textbf{Remark. } While preparing the second version of the manuscript, Hao Huang provided another counterexample to the conjecture in the range $3k\le n \le (2+\sqrt 3) k$ \cite{HH}. We presented our counterexample because we thought that the method used to derive it is interesting in its own right.

\textsc{Acknowledgements:} I thank Peter Frankl for introducing me to the concept of diversity and the problem studied in the paper, as well as for many fruitful discussions and interesting ideas that he shared with me. I thank Noam Lifshitz for proposing a simplified proof of Lemma~\ref{lemkey} and especially for finding the beautiful counterexample from the last section, as well as pointing out several errors in the earlier version of the text of Section~3.1. I also thank Maksim Zhukovskii for helpful discussions on the proof of Lemma~\ref{lem10}.

\end{document}